\theoremstyle{plain}
\newtheorem{thm}{Theorem}[section]
\newtheorem{lem}[thm]{Lemma}
\newtheorem{prop}[thm]{Propsition}
\theoremstyle{definition}
\theoremstyle{remark}
\newtheorem{remark}[thm]{Remark}
\newtheoremstyle{exerstyle}{3pt}{3pt}{\itshape}{}{\bfseries}{}{2mm}{}
\theoremstyle{exerstyle}
\begin{document}

\title {\bf The connection between the Basel problem and a special integral}
\author{\it Haifeng Xu\thanks{\url{mailto:hfxu@yzu.edu.cn} The work is partially supported by National Natural Science Foundation of China (NSFC), Tianyuan fund for Mathematics, No. 11126046.}, Jiuru Zhou}
\date{\small\today}

\maketitle

\begin{abstract}
By using Fubini theorem or Tonelli theorem, we find that the zeta function value at 2 is equal to a special integral. Furthermore, We find that this special integral is two times of another special integral. By using this fact we obtain the relationship between Genocchi numbers and Bernoulli numbers. And get some results about Bernoulli polynomials.
\end{abstract}


\noindent{\bf MSC2010:} 11B68, 11M06.\\
{\bf Keywords:} Basel problem; Zeta function; Bernoulli numbers; Bernoulli polynomials.


\setcounter{section}{-1}
\section{Introduction}

The Basel problem is a famous problem in mathematical analysis with relevance to number theory, first posed by Pietro Mengoli in 1644 and solved by Leonhard Euler in 1735. The problem asks for the precise summation of the reciprocals of the squares of the natural numbers, i.e. the precise sum of the infinite series:
\begin{equation}\label{eqn:zeta2}
\sum\limits_{n=1}^{+\infty}\frac{1}{n^2}=\lim_{n\rightarrow +\infty}\bigl(\frac{1}{1^2}+\frac{1}{2^2}+\cdots+\frac{1}{n^2}\bigr).
\end{equation}

Euler's original derivation of the value $\frac{\pi^2}{6}$ essentially extended observations about finite polynomials and assumed that these same properties hold true for infinite series. His idea is very intuitional but not rigorous, because his original proof should use a result which is so called Weierstrass factorization theorem introduced by Weierstrass 100 years later. For more information, please refer to \cite{Baselproblem}.

Euler generalised the problem considerably, and his ideas were taken up years later by Bernhard Riemann in his 1859 seminal paper "On the number of primes less than a given magnitude", in which he defined his zeta function and proved its basic properties.

There are more general results \cite{Lesko2003} about the progression \eqref{eqn:zeta2},
\[
\sum_{n=1}^{+\infty}\frac{1}{(n+a)^2}=\lim_{b\rightarrow a}\int_0^1\frac{t^b-t^a}{b-a}\cdot\frac{-1}{1-t}dt.
\]
Let $a=0$, it becomes the $\zeta(2)$.

Moreover, using Fourier expansion of $x^2$,
\[
x^2\sim\frac{\pi^2}{3}+4\sum_{n=1}^{+\infty}(-1)^n\frac{\cos nx}{n^2},
\]
we will get
\begin{equation}\label{eqn:half_zeta2}
\frac{\pi^2}{12}=\sum_{n=1}^{+\infty}(-1)^{n-1}\frac{1}{n^2}.
\end{equation}
In the end of section 5, we give another proof of \eqref{eqn:half_zeta2} by using the relationship of two special integrals which are introduced in section 3 and 4. Also, inspired by this, in section 6, we discuss about Bernoulli numbers and Genocchi numbers. We obtain some properties of Bernoulli numbers and Bernoulli polynomials.


\section{Basic properties}

The convergence of the infinite series \eqref{eqn:zeta2} is obvious. We can use various methods to prove it. Especially, when we consider Riemann-Zeta function, $\zeta(s)=\sum_{n=1}^{\infty}\frac{1}{n^s}$ ($s\in\mathbb{R}$), the progression diverges when $s\leq 1$, and converges when $s > 1$. Also, we can use the estimate of the partial sum of the series.
\[
\begin{split}
\sum_{n=1}^{N}\frac{1}{n^2} & < 1+\sum_{n=2}^{N}\frac{1}{n(n-1)}\\
&=1+\sum_{n=2}^{N}(\frac{1}{n-1}-\frac{1}{n})=1+1-\frac{1}{N}\rightarrow 2.
\end{split}
\]	
Or we can use the Cauchy principle. In fact, for $n>1$,
\[
\frac{1}{n^2}<\frac{1}{n(n-1)}=\frac{1}{n-1}-\frac{1}{n},
\]
thus
\[
0<\sum_{k=n+1}^{n+p}\frac{1}{k^2}<\sum_{k=n+1}^{n+p}\Bigl(\frac{1}{k-1}-\frac{1}{k}\Bigr)=\frac{1}{n}-\frac{1}{n+p}<\frac{1}{n}\rightarrow 0,\quad(n\rightarrow\infty).
\]
Then, the progression converges.


\section{Calculation of $\zeta(2)$}
There are various proofs of the Basel problem and Robin Chapman wrote a survey\cite{Chapman1999} about these. Some are elementary and some will use advanced mathematics such as Fourier analysis, complex analysis or multivariable calculus.
Here we review the method of Jiaqiang Mei \cite{mei}, which is rather elementary and easy to understand. There is also an elementary proof on the Wiki \cite{Baselproblem}.

Repeated use of the equation
\[
\frac{1}{\sin^2 x}=\frac{\cos^2 \frac{x}{2}+\sin^2 \frac{x}{2}}{4\sin^2\frac{x}{2}\cos^2\frac{x}{2}}=\frac{1}{4}\biggl[\frac{1}{\sin^2\frac{x}{2}}+\frac{1}{\sin^2\frac{\pi+x}{2}}\biggr],
\]
we get
\begin{equation}\label{eqn:2.2}
\begin{split}
\frac{1}{\sin^2 x}&=\frac{1}{4}\biggl[\frac{1}{\sin^2\frac{x}{2}}+\frac{1}{\sin^2\frac{\pi+x}{2}}\biggr]\\
&=\frac{1}{4^2}\biggl[\frac{1}{\sin^2\frac{x}{4}}+\frac{1}{\sin^2\frac{2\pi+x}{4}}+\frac{1}{\sin^2\frac{\pi+x}{4}}+\frac{1}{\sin^2\frac{3\pi+x}{4}}\biggr]\\
&=\cdots\\
&=\frac{1}{2^{2n}}\sum_{k=0}^{2^n-1}\frac{1}{\sin^2\frac{k\pi+x}{2^n}}.
\end{split}
\end{equation}
Note that
\[
\sin^2\frac{k\pi+x}{2^n}=\sin^2(\frac{k\pi+x-2^n\pi}{2^n}+\pi)=\sin^2\frac{(k-2^n)\pi+x}{2^n},
\]
we may rewrite the equation \eqref{eqn:2.2} as
\[
\begin{split}
\frac{1}{\sin^2 x}&=\frac{1}{2^{2n}}\sum_{k=-2^{n-1}}^{2^{n-1}-1}\frac{1}{\sin^2\frac{x+k\pi}{2^n}}\\
&=E_n+\sum_{k=-2^{n-1}}^{2^{n-1}-1}\frac{1}{(x+k\pi)^2},
\end{split}
\]
where
\[
E_n=\frac{1}{2^{2n}}\sum_{k=-2^{n-1}}^{2^{n-1}-1}\biggl[\frac{1}{\sin^2\frac{x+k\pi}{2^n}}-\frac{1}{(\frac{x+k\pi}{2^n})^2}\biggr].
\]
Using the inequality
\[
0 < \frac{1}{\sin^2 x}-\frac{1}{x^2}=1+\frac{\cos^2 x}{\sin^2 x}-\frac{1}{x^2} < 1,\quad\forall\ x\in [-\frac{\pi}{2},\frac{\pi}{2}],
\]
we get the estimation
\[
0 < E_n < \frac{1}{2^{2n}}\cdot 2^n=\frac{1}{2^n},\quad\forall\ x\in [0,\frac{\pi}{2}].
\]
Let $n\rightarrow\infty$, we obtain the following equation
\[
\frac{1}{\sin^2 x}=\sum_{k\in\mathbb{Z}}\frac{1}{(x+k\pi)^2},\quad x\neq n\pi.
\]
The above progression is uniformly convergent in any closed interval not containing  $\{n\pi\}$ and can be written as
\[
\frac{1}{\sin^2 x}=\frac{1}{x^2}+\sum_{n=1}^{\infty}\biggl[\frac{1}{(x+n\pi)^2}+\frac{1}{(x-n\pi)^2}\biggr],\quad x\neq k\pi.
\]
Especially, we have
\[
\frac{1}{3}=\lim_{x\rightarrow 0}(\frac{1}{\sin^2 x}-\frac{1}{x^2})=2\sum_{n=1}^{\infty}\frac{1}{(n\pi)^2},
\]
Therefore,
\[
\zeta(2)=\sum_{n=1}^{\infty}\frac{1}{n^2}=\frac{\pi^2}{6}.
\]


\section{As a special case of power series}
For the power series $\sum_{n=1}^{\infty}\frac{2^n}{n^2}x^n$, we calculate the domain of convergence. Since
\[
\lim_{n\rightarrow\infty}\sqrt[n]{|a_n|}=\lim_{n\rightarrow\infty}\sqrt[n]{\frac{2^n}{n^2}}=2,
\]
the radius of convergence equals $R=\frac{1}{2}$. If $x=\frac{1}{2}$, the power series becomes the progression $\sum_{n=1}^{\infty}\frac{1}{n^2}$ which is convergent. If $x=-\frac{1}{2}$, then the power series becomes $\sum_{n=1}^{\infty}(-1)^n\frac{1}{n^2}$ which is also convergent. Therefore, the domain of convergence is $[-\frac{1}{2},\frac{1}{2}]$.

Suppose $S(x)=\sum_{n=1}^{\infty}\frac{2^n}{n^2}x^n$. We can do the derivation item by item in the inteval $(-\frac{1}{2},\frac{1}{2})$. That is,
\[
S'(x)=\sum_{n=1}^{\infty}\frac{2^n}{n}x^{n-1}.
\]
Multiply both sides by $x$,
\[
xS'(x)=\sum_{n=1}^{\infty}\frac{2^n}{n}x^n.
\]
Derivate both sides again, we get
\[
S'(x)+xS''(x)=\sum_{n=1}^{\infty}2^n x^{n-1}.
\]
Thus,
\[
\frac{1}{2}(S'(x)+xS''(x))=\sum_{n=1}^{\infty}(2x)^{n-1}=\frac{1}{1-2x}.
\]
We obtain a second order ordinary differential equation
\begin{equation}\label{eqn:3.3}
S'(x)+xS''(x)=\frac{2}{1-2x}.
\end{equation}
If we set $y(x)=S'(x)$, the equation \eqref{eqn:3.3} is converted to a first order equation,
\[
y+xy'=\frac{2}{1-2x}.
\]
Multiplie both side by $dx$,
\[
ydx+xdy=\frac{2}{1-2x}dx.
\]
Let $h(x)=xy(x)$, we have
\[
dh(x)=\frac{2}{1-2x}dx.
\]
Then
\[
h(x)=-\ln(1-2x)+C.
\]
Using the initial conditions $y(0)=S'(0)=2$, $h(0)=0y(0)=0$, we have
\[
h(x)=-\ln(1-2x).
\]
Then, if $x\neq 0$,
\[
y(x)=-\frac{\ln(1-2x)}{x}.
\]
That is
\[
S'(x)=-\frac{\ln(1-2x)}{x}.
\]
Note that $S(0)=0$. Then,
\[
S(x)=S(x)-S(0)=\int_0^x S'(t)dt=-\int_0^x \frac{\ln(1-2t)}{t}dt.
\]
Particularly,
\[
\begin{split}
S(\frac{1}{2})&=-\int_0^{1/2} \frac{\ln(1-2t)}{t}dt\\
&=-\int_{0}^{1}\frac{\ln(1-u)}{u}du\\
&=-\int_{0}^{1}\frac{\ln u}{1-u}du.
\end{split}
\]
Therefore, for the improper integral $\int_{0}^{1}\frac{\ln u}{1-u}du$, we know its value is equal to $-\frac{\pi^2}{6}$.


\section{From the special integral to the Basel problem}

In this section, we will calculate the special integral arised in the last section, i.e.
\[
\int_{0}^{1}\frac{\ln u}{1-u}du.
\]
For $n\geqslant 0$,
\[
\begin{split}
\int_{0}^{1}t^n\ln t dt&=\frac{1}{n+1}\int_{0}^{1}\ln t dt^{n+1}\\
&=\frac{1}{n+1}\biggl[t^{n+1}\ln t\biggl|_{0}^{1}-\int_{0}^{1}t^{n+1}d\ln t\biggr]\\
&=\frac{1}{n+1}\biggl[0-\lim_{t\rightarrow 0^{+}}t^{n+1}\ln t-\int_{0}^{1}t^n dt\biggr]\\
&=-\frac{1}{(n+1)^2}.
\end{split}
\]
Thus
\[
\sum_{n=1}^{\infty}\frac{1}{n^2}=\sum_{n=1}^{\infty}\int_{0}^{1}t^{n-1}|\ln t|dt.
\]
Let $f_n(t)=t^{n-1}|\ln t|$, $t\in(0,1]$, $n=1,2,\ldots$. Obviously, $f_n\in L^1([0,1],|\cdot|)$, where $|\cdot|$ is Lebesgue measure. For simplicity, we denote $L^1([0,1],|\cdot|)$ by $L^1$.

By Minkowski inequality \cite{Grafakos}, we have
\begin{equation}\label{eqn:4.1}
\biggl\|\sum_{n=1}^{\infty}f_n\biggr\|_{L^1}\leqslant\sum_{n=1}^{\infty}\|f_n\|_{L^1}.
\end{equation}
Then,
\[
\int_{0}^{1}\Bigl(\sum_{n=1}^{\infty}t^{n-1}|\ln t|\Bigr)dt\leqslant\sum_{n=1}^{\infty}\int_{0}^{1}t^{n-1}|\ln t|dt=\sum_{n=1}^{\infty}\frac{1}{n^2}=\zeta(2).
\]
We will prove the equality holds in our case. First we have the following lemma.

\begin{lem}\label{lem:4.1}
$\|f+g\|_{L^1}=\|f\|_{L^1}+\|g\|_{L^1}$ iff there is a real valued function $h$ that is nonnegative a.e. such that when both $f$ and $g$ are not 0 then $g = hf$ a.e.
\end{lem}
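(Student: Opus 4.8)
The plan is to reduce the $L^1$ identity to a pointwise statement and then characterize exactly when that pointwise equality holds. Since $|f+g|\leqslant |f|+|g|$ holds pointwise, integrating recovers the inequality \eqref{eqn:4.1}, and the difference $|f|+|g|-|f+g|$ is nonnegative everywhere. Hence $\|f+g\|_{L^1}=\|f\|_{L^1}+\|g\|_{L^1}$ is equivalent to $\int_0^1\bigl(|f|+|g|-|f+g|\bigr)\,dt=0$, which, because the integrand is nonnegative, is in turn equivalent to the pointwise identity $|f(t)+g(t)|=|f(t)|+|g(t)|$ for a.e.\ $t$.

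First I would isolate the elementary real-number fact that $|a+b|=|a|+|b|$ if and only if $ab\geqslant 0$; applying it pointwise shows the norm equality is equivalent to $f(t)g(t)\geqslant 0$ a.e. This is the conceptual heart of the lemma, and the remainder is just translating this sign condition into the factorization $g=hf$.

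For the forward direction, assume the norm equality, so $fg\geqslant 0$ a.e. On the set $E=\{t:f(t)\neq 0,\ g(t)\neq 0\}$ the product is strictly positive a.e., so $g(t)/f(t)>0$ there; I would set $h(t)=g(t)/f(t)$ on $E$ and extend by $h=0$ off $E$. Then $h\geqslant 0$ a.e.\ and $g=hf$ a.e.\ on $E$, which is precisely the stated condition. For the converse, given such an $h$, I would check $|f+g|=|f|+|g|$ a.e.\ by splitting the domain into the regions $\{f=0\}$, $\{g=0\}$, and $E$: on $E$ one has $|f+hf|=(1+h)|f|=|f|+h|f|=|f|+|g|$ since $h\geqslant 0$, while the two degenerate regions are immediate.

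The main difficulty here is bookkeeping rather than genuine analysis. One must phrase the characterization $|a+b|=|a|+|b|\iff ab\geqslant 0$ precisely, manage the null sets so that the various ``a.e.'' claims compose correctly, and observe that the freedom in defining $h$ away from $E$ causes no trouble, since the lemma constrains $h$ only on the set where both $f$ and $g$ are nonzero.
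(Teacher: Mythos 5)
Your proof is correct, and it is in fact more than the paper provides: the paper does not prove Lemma \ref{lem:4.1} at all, but simply refers the reader to an external source (Burkard's notes). Your argument is the standard, self-contained one: reduce the norm equality to the pointwise a.e.\ identity $|f+g|=|f|+|g|$ (using that a nonnegative integrable function with zero integral vanishes a.e.), invoke the elementary fact that $|a+b|=|a|+|b|$ iff $ab\geqslant 0$, and then translate the resulting sign condition $fg\geqslant 0$ a.e.\ into the factorization $g=hf$ with $h=g/f\geqslant 0$ on the set where both functions are nonzero and $h=0$ elsewhere. All the steps check out, including the converse via the three-way case split; the only points worth making explicit in a final write-up are the measurability of $h$ (clear, since $f$, $g$, and the set $E$ are measurable) and the precise handling of null sets, which you already flag.
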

\begin{proof}
Please refer to \cite{Burkard}.
\end{proof}

Lemma \ref{lem:4.1} can be generalized to infinite summation case.
\begin{lem}
$\|\sum_{n=1}^{\infty}f_n\|_{L^1}=\sum_{n=1}^{\infty}\|f_n\|_{L^1}$ iff there is a real valued function $f$ and a series of real valued functions $g_n$ which have the same signs such that $f_n = g_n f$ a.e.
\end{lem}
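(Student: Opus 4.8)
The plan is to reduce the infinite-summation equality condition to the finite-sum case established in Lemma \ref{lem:4.1}, then pass to the limit. First I would record the basic structure: the forward direction (the sign/proportionality condition implies equality of norms) is the routine half, since if all the $f_n$ share proportionality through a common $f$ with consistent signs, then $\sum_n f_n$ and each $f_n$ point in the ``same direction'' pointwise a.e., so no cancellation occurs in the integral and the triangle inequality \eqref{eqn:4.1} becomes an equality. The substantive direction is the converse: assuming $\|\sum_{n=1}^{\infty} f_n\|_{L^1} = \sum_{n=1}^{\infty} \|f_n\|_{L^1}$, I must extract the stated factorization.

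For the converse I would argue as follows. Set $S_N = \sum_{n=1}^{N} f_n$ and observe the chain of inequalities
\[
\Bigl\|\sum_{n=1}^{\infty} f_n\Bigr\|_{L^1} \leqslant \Bigl\|\sum_{n=1}^{N} f_n\Bigr\|_{L^1} + \Bigl\|\sum_{n=N+1}^{\infty} f_n\Bigr\|_{L^1} \leqslant \sum_{n=1}^{\infty}\|f_n\|_{L^1}.
\]
Since the two ends are equal by hypothesis, every intermediate inequality is forced to be an equality for each $N$. In particular $\|S_N\|_{L^1} = \sum_{n=1}^{N}\|f_n\|_{L^1}$ holds for all finite $N$. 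Applying Lemma \ref{lem:4.1} inductively to the splitting $S_N = f_N + S_{N-1}$, I would conclude that at each stage $f_N$ is a nonnegative-multiple of the accumulated sum $S_{N-1}$ wherever both are nonzero, and propagating this back shows all the $f_n$ are pointwise nonnegative multiples of one another on the set where they are nonzero. Choosing $f$ to be any fixed nonzero $f_n$ (or a convenient pointwise reference such as $f = S_N$ for large $N$), I would then define $g_n$ by $f_n = g_n f$ a.e., and the equalities above guarantee the $g_n$ are all of one sign on the common support.

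I expect the main obstacle to be handling the sets where some of the $f_n$ vanish and making the ``same signs'' and the common factor $f$ precise and measurable. Lemma \ref{lem:4.1} only compares the factor on the set where both functions are nonzero, so stitching the pairwise proportionalities into a single global function $f$ requires care: the supports of the $f_n$ may differ, and one must verify that the proportionality constants are consistent across overlapping supports and that the resulting $g_n$ are measurable. One clean route is to observe that, in the application to $f_n(t) = t^{n-1}|\ln t|$, all the functions are nonnegative on $(0,1]$ with full common support, so $f$ can simply be taken as $|\ln t|$ and $g_n(t) = t^{n-1} \geqslant 0$, which makes the factorization transparent; the general statement is then the natural abstraction of this concrete situation. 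I would close by checking that the infinite-series convergence of $\sum_n \|f_n\|_{L^1}$ (already known here, equal to $\zeta(2)$) justifies the tail estimate $\|\sum_{n>N} f_n\|_{L^1} \to 0$ used above, so that the limit $N \to \infty$ is legitimate.
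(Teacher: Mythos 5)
Your outline is sound but is weighted differently from the paper's own proof, and the contrast is worth recording. The paper really only argues the ``if'' direction for the infinite case: assuming the factorization/common-sign condition, it works on the set $E$ where all the $f_n(x)$ share a sign (whose complement is null by hypothesis), uses the pointwise monotonicity $|\sum_{n=1}^{\infty}f_n|\geqslant|\sum_{n=1}^{N}f_n|$ on $E$ together with the finite-sum equality to get $\|\sum_{n=1}^{\infty}f_n\|_{L^1}\geqslant\sum_{n=1}^{\infty}\|f_n\|_{L^1}$, and closes with Minkowski \eqref{eqn:4.1}; the converse is left implicit. Your proposal supplies exactly what the paper omits: the chain $\|\sum_{n}f_n\|_{L^1}\leqslant\|S_N\|_{L^1}+\|\sum_{n>N}f_n\|_{L^1}\leqslant\sum_{n}\|f_n\|_{L^1}$ forces $\|S_N\|_{L^1}=\sum_{n=1}^{N}\|f_n\|_{L^1}$ for every $N$, after which Lemma \ref{lem:4.1} (equivalently, the pointwise identity $|\sum_{n\leqslant N}f_n(x)|=\sum_{n\leqslant N}|f_n(x)|$ a.e., which for real numbers forces a common sign) gives the necessity direction. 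Two repairs are needed to make your sketch airtight. First, the ``no cancellation'' dispatch of the forward direction still requires the limit interchange that the paper's truncation-on-$E$ argument performs, i.e.\ monotone convergence applied to $\sum_{n\leqslant N}|f_n|$. Second, taking $f$ to be ``any fixed nonzero $f_n$'' or $S_N$ for large $N$ can fail where supports differ, since that reference function may vanish where some other $f_m$ does not; a safer choice is a weighted sum such as $f=\sum_{n}2^{-n}f_n/(1+\|f_n\|_{L^1})$, which (because the signs agree a.e., so no cancellation occurs) vanishes exactly where all the $f_n$ vanish and makes each $g_n=f_n/f$ well defined, measurable and nonnegative. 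With those adjustments your route is complete and in fact establishes more of the stated ``iff'' than the paper's own argument does.
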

\begin{proof}
First, by induction, the lemma holds for finite sum. That is
\[
\|\sum_{n=1}^{N}f_n\|_{L^1}=\sum_{n=1}^{N}\|f_n\|_{L^1}.
\]
Let $E:=\{x\mid f_n(x) \text{have the same sign}\}$. Then,
\[
\biggl\|\sum_{n=1}^{\infty}f_n\biggr\|_{L^1(E)}\geqslant\biggl\|\sum_{n=1}^{N}f_n\biggr\|_{L^1(E)}=\sum_{n=1}^{N}\|f_n\|_{L^1(E)}\rightarrow\sum_{n=1}^{\infty}\|f_n\|_{L^1(E)}.
\]
Since the measure of $E^c$ is zero, we have
\[
\biggl\|\sum_{n=1}^{\infty}f_n\biggr\|_{L^1}\geqslant\sum_{n=1}^{\infty}\|f_n\|_{L^1}.
\]
Combine equation \eqref{eqn:4.1}, we complete the proof.
\end{proof}

On the other hand, we observe that, for $t\in (0,1)$
\[
\frac{\ln t}{1-t}=\ln t\Bigl(\sum_{n=0}^{\infty}t^n\Bigr)=\sum_{n=1}^{\infty}t^{n-1}\ln t.
\]
Then we get
\[
\int_{0}^{1}\frac{|\ln t|}{1-t}dt=\int_{0}^{1}\Bigl(\sum_{n=1}^{\infty}t^{n-1}|\ln t|\Bigr)dt=\sum_{n=1}^{\infty}\int_{0}^{1}t^{n-1}|\ln t|dt=\sum_{n=1}^{\infty}\frac{1}{n^2}=\zeta(2).
\]

We give a remark about the second equality of the above equation. It can be infered by Fubini theorem or Tonelli's theorem \cite{FubiniTheorem}.

Infact,
\[
\int_{0}^{1}\sum_{n=1}^{\infty}t^{n-1}|\ln t|dt=\int_{[0,1]}\int_{\mathbb{N}}t^{n-1}|\ln t|d\mu(n)d\nu(t),
\]
where $\mu$ is the counting measure on $\mathbb{N}$, and $\nu$ is the Lebesgue measure on $\mathbb{R}$. Obviously, they are both $\sigma$-finite measures. And since $t^{n-1}|\ln t|$ is non-negative, by Tonelli's theorem,
\[
\int_{[0,1]}\int_{\mathbb{N}}t^{n-1}|\ln t|d\mu(n)d\nu(t)=\int_{\mathbb{N}}\int_{[0,1]}t^{n-1}|\ln t|d\mu(n)d\nu(t)=\sum_{n=1}^{\infty}\int_{0}^{1}t^{n-1}|\ln t|dt.
\]

There are other ways to get this relationship from this special integral to $\zeta(2)$. First, recall the lemma established by James P. Lesko and Wendy D. Smith \cite{Lesko2003}.
\begin{lem}
For $r\in [-1,1)$, $a>0$ and $b\geqslant 0$, we have
\begin{equation}\label{eqn:5}
\sum_{n=1}^{\infty}\frac{r^n}{an+b}=\frac{1}{a}\int_{0}^{1}\frac{ru^{\frac{b}{a}}}{1-ru}du.
\end{equation}
Especially, when $a=1$ and $b=0$, \eqref{eqn:5} yields
\[
\sum_{n=1}^{\infty}\frac{r^n}{n}=\int_{0}^{1}\frac{r}{1-ru}du=-\ln(1-r).
\]
\end{lem}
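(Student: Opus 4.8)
The plan is to prove \eqref{eqn:5} by expanding the kernel $\frac{1}{1-ru}$ as a geometric series, integrating term by term, and then justifying the interchange of summation and integration by splitting according to the sign of $r$, with the endpoint $r=-1$ treated separately by a continuity (Abel) argument.

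First I would record the elementary moment integral: since $b\geq 0$ and $a>0$ force the exponent $n-1+\tfrac{b}{a}>-1$, we have
\[
\int_{0}^{1}u^{n-1+\frac{b}{a}}\,du=\frac{1}{n+\frac{b}{a}}=\frac{a}{an+b},\qquad n\geq 1.
\]
Next, for $u\in[0,1)$ with $|ru|<1$, I would expand and reindex (setting $n=m+1$):
\[
\frac{ru^{b/a}}{1-ru}=ru^{b/a}\sum_{m=0}^{\infty}(ru)^{m}=\sum_{n=1}^{\infty}r^{n}u^{n-1+\frac{b}{a}}.
\]
Granting the interchange $\int\sum=\sum\int$, multiplying by $\tfrac{1}{a}$ and inserting the moment integral gives
\[
\frac{1}{a}\int_{0}^{1}\frac{ru^{b/a}}{1-ru}\,du=\frac{1}{a}\sum_{n=1}^{\infty}r^{n}\cdot\frac{a}{an+b}=\sum_{n=1}^{\infty}\frac{r^{n}}{an+b},
\]
which is exactly \eqref{eqn:5}.

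The main obstacle is the legitimacy of the term-by-term integration, and I would handle it in cases. For $0\leq r<1$ every summand $r^{n}u^{n-1+b/a}$ is nonnegative, so Tonelli's theorem (as already used in Section~4) applies verbatim. For $-1<r<0$ the partial sums are bounded in absolute value by the integrable function $\frac{u^{b/a}}{1-|r|u}$, so the dominated convergence theorem yields the interchange. The genuinely delicate case is $r=-1$, where $\sum_{m}(-u)^{m}$ fails to converge at $u=1$ and the pointwise expansion breaks down; here I would not expand directly but instead establish \eqref{eqn:5} on $(-1,0)$ and pass to the limit $r\to-1^{+}$. The right-hand side is continuous on $[-1,0]$ by dominated convergence, since for such $r$ the integrand is bounded in modulus by $u^{b/a}\in L^{1}([0,1])$ (the denominator $1-ru=1+|r|u\geq 1$ stays bounded below). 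The left-hand side is the power series $\sum_{n}\frac{r^{n}}{an+b}$, which converges at $r=-1$ by the Leibniz criterion and is therefore left-continuous there by Abel's theorem. Matching the two continuous extensions gives the identity at $r=-1$ as well.

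Finally, I would dispatch the special case $a=1,\,b=0$ by direct evaluation, with no series needed:
\[
\frac{1}{a}\int_{0}^{1}\frac{ru^{b/a}}{1-ru}\,du=\int_{0}^{1}\frac{r}{1-ru}\,du=\bigl[-\ln(1-ru)\bigr]_{0}^{1}=-\ln(1-r),
\]
which confirms the stated consequence $\sum_{n=1}^{\infty}\frac{r^{n}}{n}=-\ln(1-r)$.
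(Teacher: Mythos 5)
Your proof is correct, and it is worth noting that the paper itself offers no proof of this lemma: it is simply quoted from Lesko and Smith's article, so your argument is not competing with anything in the text but rather supplying the missing justification. The skeleton --- expand $\frac{1}{1-ru}$ geometrically, integrate the moments $\int_0^1 u^{n-1+b/a}\,du=\frac{a}{an+b}$, and interchange $\sum$ and $\int$ --- is the standard one, and your case analysis for the interchange is exactly what is needed: Tonelli for $0\le r<1$, where every term is nonnegative; domination of the partial sums by $\frac{u^{b/a}}{1-|r|u}\le\frac{u^{b/a}}{1-|r|}$ for $-1<r<0$; and, at the genuinely delicate endpoint $r=-1$ (where the geometric expansion diverges at $u=1$), Abel's theorem on the series side matched against dominated convergence on the integral side, with dominating function $u^{b/a}$ thanks to $1-ru\ge 1$ for $r\le 0$. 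The cited reference obtains the same identity by its ``Laplace transform technique,'' writing $\frac{1}{an+b}=\int_0^\infty e^{-(an+b)t}\,dt$ and summing under the integral; the substitution $u=e^{-at}$ turns that computation into yours, so the two routes are equivalent in content. Your direct evaluation of the special case $a=1$, $b=0$ is also fine.
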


By this lemma,
\[
\begin{split}
\int_{0}^{1}\frac{|\ln t|}{1-t}dt&=(-1)\int_{0}^{1}\frac{\ln(1-r)}{r}dr\\
&=(-1)\int_{0}^{1}\biggl[\frac{1}{r}\cdot(-1)\int_{0}^{1}\frac{r}{1-ru}du\biggr]dr\\
&=\int_{0}^{1}\int_{0}^{1}\frac{1}{1-ru}dudr.
\end{split}
\]
Then by monotone convergence theorem, it equals to $\sum_{n=1}^{\infty}\frac{1}{n^2}$. See \cite{Chapman1999}.

Or, we can do it in this way,
\[
\begin{split}
\int_{0}^{1}\frac{|\ln t|}{1-t}dt&=(-1)\int_{0}^{1}\frac{\ln(1-r)}{r}dr\\
&=(-1)\int_{0}^{1}\biggl[\frac{1}{r}\cdot(-1)\sum_{n=1}^{\infty}\frac{r^n}{n}\biggr]dr\\
&=\int_{0}^{1}\sum_{n=1}^{\infty}\frac{r^{n-1}}{n}dr\\
&=\sum_{n=1}^{\infty}\frac{1}{n}\int_{0}^{1}r^{n-1}dr\\
&=\sum_{n=1}^{\infty}\frac{1}{n^2}.
\end{split}
\]


\section{Relationship between two special integrals}

We will use a result from \cite[Exer 20]{Polya1}
\begin{lem}\label{lem:5.1}
Assume that the function $f(x)$ is monotone on the interval $(0,1)$. It need not be bounded at the points $x=0$, $x=1$; we assume however that the improper integral $\int_{0}^{1}f(x)dx$ exists. Under these conditions,
\[
\lim_{n\rightarrow+\infty}\frac{1}{n}\Bigl[f(\frac{1}{n})+f(\frac{2}{n})+\cdots f(\frac{n-1}{n})\Bigr]=\int_{0}^{1}f(x)dx.
\]
\end{lem}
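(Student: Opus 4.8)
The plan is to reduce to the increasing case and then sandwich the sum $S_n := \frac{1}{n}\sum_{k=1}^{n-1} f(k/n)$ between two truncations of the integral, using monotonicity on each subinterval and the convergence of the improper integral to close the gap. First I would observe that it suffices to treat the case where $f$ is nondecreasing: if $f$ is nonincreasing, replacing $f$ by $-f$ preserves monotonicity, preserves the existence of the improper integral, and satisfies $S_n(-f) = -S_n(f)$, so the conclusion transfers. I would also point out that the reason the summation indices run only from $1$ to $n-1$ is that the sample points $k/n$ then all lie in the open interval $(0,1)$, where $f$ is finite; the endpoints $0$ and $1$, at which $f$ may be unbounded, are deliberately excluded.

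The key step exploits monotonicity. Assuming $f$ nondecreasing, for every $x \in [k/n,(k+1)/n]$ we have $f(k/n) \le f(x) \le f((k+1)/n)$, and integrating over this subinterval of length $1/n$ gives
\[
\frac{1}{n} f(k/n) \le \int_{k/n}^{(k+1)/n} f(x)\,dx \le \frac{1}{n} f((k+1)/n).
\]
Summing the left inequality over $k = 1,\ldots,n-1$ and the right inequality over $k = 0,\ldots,n-2$ then telescopes the integrals and yields the two-sided estimate
\[
\int_0^{(n-1)/n} f(x)\,dx \;\le\; S_n \;\le\; \int_{1/n}^{1} f(x)\,dx.
\]

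It remains to show that both bounding integrals converge to $\int_0^1 f(x)\,dx$. Writing $\int_{1/n}^1 f = \int_0^1 f - \int_0^{1/n} f$ and $\int_0^{(n-1)/n} f = \int_0^1 f - \int_{1-1/n}^1 f$, I would invoke the assumed convergence of the improper integral, which by the Cauchy criterion forces the two tail integrals $\int_0^{1/n} f$ and $\int_{1-1/n}^1 f$ to tend to $0$ as $n \to \infty$. The squeeze theorem then delivers $\lim_{n\to\infty} S_n = \int_0^1 f(x)\,dx$, which is the assertion.

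The main obstacle is conceptual rather than computational: one must cope with the possible unboundedness of $f$ at the endpoints. This is precisely where the hypothesis that the improper integral exists does the work, since it is exactly what makes the endpoint tails vanish, and it also explains why the sum omits the terms $f(0/n)$ and $f(n/n)$, which could be infinite. Monotonicity plays the complementary role: it is what makes the per-interval sandwiching valid and what guarantees that each truncated integral is finite for fixed $n$ before the limit is taken.
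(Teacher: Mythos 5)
Your argument is correct and complete. The paper itself gives no proof of this lemma: it simply cites P\'olya--Szeg\H{o}, \emph{Problems and Theorems in Analysis I}, Exercise 20, so there is no in-paper argument to compare against. Your sandwich
\[
\int_0^{(n-1)/n} f(x)\,dx \;\le\; \frac{1}{n}\sum_{k=1}^{n-1} f\bigl(\tfrac{k}{n}\bigr) \;\le\; \int_{1/n}^{1} f(x)\,dx
\]
for nondecreasing $f$ is exactly the standard solution of that exercise, and every step checks out: the reduction to the nondecreasing case via $f \mapsto -f$ is legitimate; the per-interval bounds $\tfrac{1}{n}f(k/n) \le \int_{k/n}^{(k+1)/n} f \le \tfrac{1}{n}f((k+1)/n)$ remain valid for the two end subintervals $[0,1/n]$ and $[(n-1)/n,1]$ even though the integrals there may be improper, because the hypothesis guarantees those improper integrals exist and monotonicity gives the one-sided comparison that is actually used on each; and the existence of the improper integral is precisely what forces the tails $\int_0^{1/n} f$ and $\int_{1-1/n}^1 f$ to vanish, closing the squeeze. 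Your closing remarks correctly identify the division of labor between the two hypotheses. The only cosmetic quibble is that $f(0)$ and $f(1)$ need not merely ``be infinite''---they need not be defined at all, since $f$ lives on the open interval---but this does not affect the argument.
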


Then for our case, the integral $\int_{0}^{1}\frac{\ln t}{1-t}dt$ exists and satisfies the conditions. In fact, let $f(t)=\frac{\ln t}{1-t}$, $t\in(0,1)$, then
\[
\begin{split}
f'(t)&=\frac{\frac{1}{t}(1-t)-\ln t\cdot(-1)}{(1-t)^2}\\
&=\frac{1-t+t\ln t}{t(1-t)^2}.
\end{split}
\]
Let $g(t)=1-t+t\ln t$, then
\[
g'(t)=-1+\ln t+1=\ln t < 0.
\]
Since $\lim\limits_{t\rightarrow 1}g(t)=0$, we have $g(t) > 0$ for $t\in(0,1)$. So $f(t)$ is monotone on the interval $(0,1)$. Then by Lemma \ref{lem:5.1}, we have
\[
\begin{split}
\int_{0}^{1}\frac{\ln t}{1-t}dt&=\lim_{n\rightarrow\infty}\frac{1}{n}\sum_{k=1}^{n-1}\frac{\ln\frac{k}{n}}{1-\frac{k}{n}}\\
&=\lim_{n\rightarrow\infty}\sum_{k=1}^{n-1}\frac{\ln k-\ln n}{n-k}.
\end{split}
\]

If we consider the improper integral $\int_{0}^{1}\frac{\ln (1-t)}{t}dt$. Let $f(t)=\frac{\ln(1-t)}{t}$, $t\in(0,1)$. Then
\[
f'(t)=\frac{\frac{-t}{1-t}-\ln(1-t)}{t^2}.
\]
Let $g(t)=\frac{-t}{1-t}-\ln(1-t)$, then $g'(t)=\frac{-t}{(1-t)^2}<0$. Since
\[
\lim_{t\rightarrow 0^+}g(t)=\lim_{t\rightarrow 0^+}\Bigl(\frac{-t}{1-t}-\ln(1-t)\Bigr)=0,
\]
$g(t)<0$. Thus, $f'(t)<0$. So $f(t)$ is also monotone on the interval $(0,1)$. By Lemma \ref{lem:5.1}, we have
\[
\begin{split}
\int_{0}^{1}\frac{\ln(1-t)}{t}dt&=\lim_{n\rightarrow\infty}\frac{1}{n}\sum_{k=1}^{n-1}\frac{\ln(1-\frac{k}{n})}{\frac{k}{n}}\\
&=\lim_{n\rightarrow\infty}\sum_{k=1}^{n-1}\frac{\ln(1-\frac{k}{n})}{k}\\
&=\lim_{n\rightarrow\infty}\ln\prod_{k=1}^{n-1}(1-\frac{k}{n})^{\frac{1}{k}}\\
&=\ln\lim_{n\rightarrow\infty}\prod_{k=1}^{n-1}(1-\frac{k}{n})^{\frac{1}{k}}.
\end{split}
\]

Next, we deduce the following equation and give another discription of \eqref{eqn:half_zeta2},

\begin{equation}\label{eqn:twoIntegral}
\int_{0}^{1}\frac{\ln t}{1-t}dt=2\int_{0}^{1}\frac{\ln t}{1+t}dt.
\end{equation}

\begin{lem}\label{lem:5.2}
Let $h(x)=\int_{0}^{x}\frac{\ln(1-t)}{t}dt$, $x\in[-1,1]$, then $h(x)+h(-x)=\frac{1}{2}h(x^2)$.
\end{lem}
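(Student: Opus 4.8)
The identity $h(x)+h(-x)=\frac{1}{2}h(x^2)$ is a standard functional equation for the dilogarithm, and the plan is to prove it by the simplest available route: show both sides agree at a single point and have the same derivative. Since $h(0)=0$, the point $x=0$ gives $h(0)+h(0)=0=\frac{1}{2}h(0)$, so it suffices to verify that the derivatives match on $(-1,1)$ and then invoke continuity at the endpoints $x=\pm 1$.

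First I would compute $h'(x)=\frac{\ln(1-x)}{x}$ directly from the fundamental theorem of calculus, which is valid on the open interval since the integrand $\frac{\ln(1-t)}{t}$ has a removable singularity at $t=0$ and is continuous elsewhere on $[-1,1)$. Differentiating the left-hand side gives
\[
\frac{d}{dx}\bigl(h(x)+h(-x)\bigr)=\frac{\ln(1-x)}{x}+\frac{\ln(1+x)}{-x}\cdot(-1)=\frac{\ln(1-x)+\ln(1+x)}{x}=\frac{\ln(1-x^2)}{x}.
\]
For the right-hand side, the chain rule yields
\[
\frac{d}{dx}\Bigl(\tfrac{1}{2}h(x^2)\Bigr)=\tfrac{1}{2}\cdot\frac{\ln(1-x^2)}{x^2}\cdot 2x=\frac{\ln(1-x^2)}{x}.
\]
The two derivatives coincide on $(0,1)$ (and by an identical computation on $(-1,0)$), so $h(x)+h(-x)-\frac{1}{2}h(x^2)$ is constant on each component; evaluating at (or taking the limit toward) $x=0$ pins that constant to zero, giving the identity on $(-1,1)$.

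The main obstacle is the behaviour at the endpoints $x=\pm 1$, where the integral defining $h$ becomes improper: at $x=1$ the integrand $\frac{\ln(1-t)}{t}$ blows up as $t\to 1^-$. I would handle this by first establishing the identity on the open interval as above, then extending to $x=\pm 1$ by continuity. The integral $\int_0^1\frac{\ln(1-t)}{t}dt$ converges (its value is $-\zeta(2)$ as computed in Section~3), so $h$ extends continuously to $x=1$; similarly $h(-1)=\int_0^{-1}\frac{\ln(1-t)}{t}dt$ converges since the integrand stays bounded near $t=-1$. Taking $x\to 1^-$ on both sides of the already-proven identity then gives $h(1)+h(-1)=\frac{1}{2}h(1)$ by continuity, so no separate endpoint argument is needed beyond verifying convergence. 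The care required here is purely to justify interchanging the limit with the continuous function $h$, which follows from the established convergence of the defining improper integrals.
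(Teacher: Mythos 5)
Your proposal is correct and follows essentially the same route as the paper, which also sets $\varphi(x)=h(x)+h(-x)-\frac{1}{2}h(x^2)$, notes $\varphi'\equiv 0$ and $\varphi(0)=0$. You simply spell out the derivative computation and the continuity argument at $x=\pm 1$ that the paper leaves implicit.
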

\begin{proof}
Let $\varphi(x)=h(x)+h(-x)-\frac{1}{2}h(x^2)$, it is easy to note that its derivate $\varphi'(x)$ equals to zero and $\varphi(0)=0$.
\end{proof}

By changing variables,
\[
h(x)=\int_{0}^{x}\frac{\ln(1-t)}{t}dt\stackrel{u=-t}{=}\int_{0}^{-x}\frac{\ln(1+u)}{u}du.
\]
Then
\[
\begin{split}
h(-1)&=\int_{0}^{1}\frac{\ln(1+u)}{u}du\\
&=\int_{0}^{1}\ln(1+u)d\ln u\\
&=\ln(1+u)\ln u\biggr|_{0}^{1}-\int_{0}^{1}\ln ud\ln(1+u)\\
&=-\int_{0}^{1}\frac{\ln u}{1+u}du.
\end{split}
\]
Here, note that $\lim\limits_{u\rightarrow 0^+}\ln(1+u)\ln u=0$. By Lemma \ref{lem:5.2},
\[
\int_{0}^{1}\frac{\ln t}{1-t}dt=\int_{0}^{1}\frac{\ln(1-t)}{t}dt=h(1)=-2h(-1)=2\int_{0}^{1}\frac{\ln t}{1+t}dt.
\]

For the integral $\int_{0}^{1}\frac{\ln t}{1+t}dt$, we have a relevant result.
\begin{lem}
Let $h(x)=\int_{1}^{x}\frac{\ln t}{1+t}dt$, $x>0$, then $h(x)+h(\frac{1}{x})=\frac{1}{2}(\ln x)^2$.
\end{lem}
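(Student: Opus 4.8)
The plan is to mimic the proof of Lemma~\ref{lem:5.2}: introduce the auxiliary function $\varphi(x)=h(x)+h(1/x)-\tfrac12(\ln x)^2$ on $(0,+\infty)$, show that $\varphi'\equiv 0$, and then pin down the constant by evaluating at a single convenient point. Since the interval $(0,+\infty)$ is connected, a vanishing derivative forces $\varphi$ to be constant, and one value determines it everywhere.

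First I would differentiate each term. By the fundamental theorem of calculus, $h'(x)=\frac{\ln x}{1+x}$. For the composite term $h(1/x)$ the chain rule gives $\frac{d}{dx}h(1/x)=h'(1/x)\cdot(-1/x^2)$, and after substituting $h'(1/x)=\frac{\ln(1/x)}{1+1/x}=\frac{-x\ln x}{x+1}$ this becomes $\frac{\ln x}{x(x+1)}$. The remaining derivative is elementary: $\frac{d}{dx}\bigl[\tfrac12(\ln x)^2\bigr]=\frac{\ln x}{x}$.

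The key step is then the algebraic cancellation. Combining the first two pieces over the common denominator $x(x+1)$ yields $\frac{\ln x\,(x+1)}{x(x+1)}=\frac{\ln x}{x}$, which exactly matches the third term, so $\varphi'(x)=0$ for all $x>0$. Finally I would evaluate at $x=1$: both $h(1)$ and $h(1/1)=h(1)$ vanish because the integral runs from $1$ to $1$, and $(\ln 1)^2=0$, so $\varphi(1)=0$ and hence $\varphi\equiv 0$, which is precisely the claimed identity.

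I do not expect any serious obstacle here; the computation is structurally identical to the one just used for Lemma~\ref{lem:5.2}. The only delicate point is the sign bookkeeping in differentiating $h(1/x)$, where the factor $-1/x^2$ must be combined correctly with $\ln(1/x)=-\ln x$. It is exactly this double sign that produces the positive term $\frac{\ln x}{x(x+1)}$ needed for the cancellation, so a careless treatment of either factor would destroy the vanishing of $\varphi'$.
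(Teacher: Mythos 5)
Your proof is correct, but it takes a different route from the paper's. You use the ``differentiate the defect function'' trick: set $\varphi(x)=h(x)+h(1/x)-\tfrac12(\ln x)^2$, verify $\varphi'\equiv 0$ on the connected interval $(0,+\infty)$, and evaluate at $x=1$; your sign bookkeeping in $\frac{d}{dx}h(1/x)=h'(1/x)\cdot(-1/x^2)$ is right and the cancellation to $\varphi'(x)=0$ checks out. The paper instead proves the identity by integration by parts, writing $h(x)=\ln x\ln(1+x)-\int_{1}^{x}\frac{\ln(1+t)}{t}\,dt$, and then uses the substitution $u=1/t$ to show $\int_{1}^{x}\frac{\ln(1+t)}{t}\,dt+\int_{1}^{1/x}\frac{\ln(1+t)}{t}\,dt=\tfrac12(\ln x)^2$, after which the boundary terms combine to $(\ln x)^2$. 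Your argument is the more uniform one --- it is structurally identical to the paper's own proof of Lemma \ref{lem:5.2}, so the two lemmas get a single template --- and it avoids any manipulation of the integrals themselves. The paper's computation, on the other hand, produces the intermediate identity relating $\int_{1}^{x}\frac{\ln(1+t)}{t}\,dt$ to its reflection under $t\mapsto 1/t$, which is a reusable fact in its own right (it is essentially the inversion formula for the dilogarithm). Either proof is complete; yours is arguably the cleaner of the two for the stated purpose.
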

\begin{proof}
\[
h(x)=\int_{1}^{x}\frac{\ln t}{1+t}dt=\int_{1}^{x}\ln t d\ln(1+t)=\ln x\ln(1+x)-\int_{1}^{x}\frac{\ln(1+t)}{t}dt,
\]
then,
\[
h(\frac{1}{x})=\ln\frac{1}{x}\ln(1+\frac{1}{x})-\int_{1}^{\frac{1}{x}}\frac{\ln(1+t)}{t}dt.
\]
Hence,
\[
\begin{split}
h(x)+h(\frac{1}{x})&=\ln x\ln(1+x)-\ln x\bigl(\ln(1+x)-\ln x\bigr)-\biggl[\int_{1}^{x}\frac{\ln(1+t)}{t}dt+\int_{1}^{\frac{1}{x}}\frac{\ln(1+t)}{t}dt\biggr]\\
&=(\ln x)^2-\frac{1}{2}(\ln x)^2=\frac{1}{2}(\ln x)^2.
\end{split}
\]
The second equality holds, because
\[
\begin{split}
\int_{1}^{x}\frac{\ln(1+t)}{t}dt&\stackrel{u=1/t}{=}-\int_{1}^{\frac{1}{x}}\frac{\ln(1+t)}{t}dt+\frac{1}{2}(\ln x)^2.
\end{split}
\]
\end{proof}

Observe that
\[
\begin{split}
\int_{0}^{1}\frac{\ln(1+t)}{t}dt&=\int_{0}^{1}\frac{1}{t}\sum_{n=1}^{\infty}(-1)^{n-1}\frac{t^n}{n}dt\\
&=\sum_{n=1}^{\infty}\frac{(-1)^{n-1}}{n}\int_{0}^{1}t^{n-1}dt\\
&=\sum_{n=1}^{\infty}(-1)^{n-1}\frac{1}{n^2}.
\end{split}
\]
Thus
\[
\sum_{n=1}^{\infty}(-1)^{n-1}\frac{1}{n^2}=\int_{0}^{1}\frac{\ln(1+t)}{t}dt=-\int_{0}^{1}\frac{\ln t}{1+t}dt=-\frac{1}{2}\int_{0}^{1}\frac{\ln t}{1-t}dt=\frac{1}{2}\zeta(2).
\]

Applying the same argument in beginning of this section, we get
\[
\int_{0}^{1}\frac{\ln t}{1+t}dt=\lim_{n\rightarrow\infty}\sum_{k=1}^{n-1}\frac{\ln k-\ln n}{n+k},
\]
and
\[
\int_{0}^{1}\frac{\ln(1+t)}{t}dt=\ln\lim_{n\rightarrow\infty}\prod_{k=1}^{n-1}(1+\frac{k}{n})^{\frac{1}{k}}.
\]

\begin{remark}
It must be very interesting if we could calculate the integral $\int_{0}^{1}\frac{\ln t}{1-t}dt$ not using the progression $\sum_{n=1}^{\infty}\frac{1}{n^2}$.
\end{remark}


\section{Bernoulli numbers and Bernoulli polynomials}

Recall some facts of Bernoulli numbers, and for more information, please refer to \cite{GTM112,mei,Polya2,Stein1,Stein2}.

The Bernoulli numbers $B_n$ are defined by the power series expansion
\[
\frac{z}{e^z-1}=\sum_{n=0}^{\infty}\frac{B_n}{n!}z^n.
\]
Then
\[
z=(e^z-1)(\sum_{n=0}^{\infty}\frac{B_n}{n!}z^n)=(\sum_{m=1}^{\infty}\frac{z^m}{m!})(\sum_{n=0}^{\infty}\frac{B_n}{n!}z^n)=\sum_{n=1}^{\infty}\frac{1}{n!}\biggl[\sum_{k=0}^{n-1}\binom{n}{k}B_k\biggr]z^n.
\]
Thus we get a recursion formula for the Bernoulli numbers, namely
\[
\frac{B_0}{n!0!}+\frac{B_1}{(n-1)!1!}+\cdots+\frac{B_{n-1}}{1!(n-1)!}=
\begin{cases}
1, & \text{if}\ n=1,\\
0, & \text{if}\ n>1.
\end{cases}
\]
We get $B_0=1$. From the identity
\[
\frac{z}{e^z-1}+\frac{z}{2}=\frac{z}{2}\cdot\frac{e^z+1}{e^z-1}=\frac{z}{2}\cdot\frac{e^{z/2}+e^{-z/2}}{e^{z/2}-e^{-z/2}}=\frac{\frac{z}{2}}{\tanh\frac{z}{2}},
\]
the function $\frac{z}{e^z-1}+\frac{z}{2}$ is an even function. Hence it has only even terms in its power series expansion.
\[
\frac{z}{e^z-1}=1-\frac{z}{2}+\sum_{n=1}^{\infty}\frac{B_{2n}}{(2n)!}z^{2n}.
\]
Sometimes, people prefer to use $\mathcal{B}_n$ to denote $(-1)^{n-1}B_{2n}$, then
\[
\frac{z}{e^z-1}=1-\frac{z}{2}+\sum_{n=1}^{\infty}\frac{(-1)^{n-1}\mathcal{B}_n}{(2n)!}z^{2n}.
\]
We have various ways to get the important equation
\begin{lem}
\[\label{eqn:ZetaBernoulli}
\zeta(2n)=(-1)^{n-1}\frac{2^{2n-1}\pi^{2n}}{(2n)!}B_{2n}=\frac{2^{2n-1}\pi^{2n}}{(2n)!}\mathcal{B}_n.
\]
\end{lem}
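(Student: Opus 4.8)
The plan is to expand $\pi z\cot(\pi z)$ as a power series in $z$ in two independent ways and then match coefficients. The first expansion comes almost for free from Section~2. Since $\frac{d}{dx}\cot x=-\frac{1}{\sin^2 x}$ and the antiderivative of $\frac{1}{(x\pm n\pi)^2}$ is $\mp\frac{1}{x\pm n\pi}$, I would integrate the identity $\frac{1}{\sin^2 x}=\frac{1}{x^2}+\sum_{n=1}^{\infty}\left(\frac{1}{(x+n\pi)^2}+\frac{1}{(x-n\pi)^2}\right)$ term by term, which is legitimate on any closed interval avoiding the poles because of the uniform convergence already established there. This yields the cotangent expansion $\cot x=\frac{1}{x}+\sum_{n=1}^{\infty}\frac{2x}{x^2-n^2\pi^2}$, the integration constant being $0$ by the oddness of $\cot$. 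Setting $x=\pi z$, multiplying by $\pi z$, and expanding each summand as a geometric series in $z^2$, I obtain, for $|z|<1$,
\[
\pi z\cot(\pi z)=1-2\sum_{m=1}^{\infty}\zeta(2m)\,z^{2m}.
\]

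For the second expansion I would use the even-function identity already recorded in this section, $\frac{w}{e^w-1}+\frac{w}{2}=\frac{w}{2}\coth\frac{w}{2}$, together with the even power series $\frac{w}{e^w-1}+\frac{w}{2}=1+\sum_{n=1}^{\infty}\frac{B_{2n}}{(2n)!}w^{2n}$. The key maneuver is the substitution $w=2\pi i z$, which converts $\frac{w}{2}\coth\frac{w}{2}$ into $\pi z\cot(\pi z)$ via $\coth(iy)=-i\cot y$. Since $w^{2n}=(2\pi i)^{2n}z^{2n}=(-1)^n(2\pi)^{2n}z^{2n}$, this produces the competing expansion
\[
\pi z\cot(\pi z)=1+\sum_{n=1}^{\infty}\frac{(-1)^n(2\pi)^{2n}}{(2n)!}\,B_{2n}\,z^{2n}.
\]

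Matching the coefficient of $z^{2n}$ in the two displays gives $-2\zeta(2n)=\frac{(-1)^n(2\pi)^{2n}}{(2n)!}B_{2n}$. Solving for $\zeta(2n)$ and using $(2\pi)^{2n}=2^{2n}\pi^{2n}$ yields $\zeta(2n)=(-1)^{n-1}\frac{2^{2n-1}\pi^{2n}}{(2n)!}B_{2n}$, and the abbreviation $\mathcal{B}_n=(-1)^{n-1}B_{2n}$ gives the second stated form at once.

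I expect the \emph{main obstacle} to be rigor in the first expansion: interchanging the sum over $n$ with the geometric expansion in $z$ creates a double series, and I must justify rearranging it into $\sum_m\zeta(2m)z^{2m}$. This is safe for $|z|<1$ by absolute convergence, since the double sum is dominated by $\sum_{n,m}|z|^{2m}/n^{2m}$, but it should be stated explicitly. A secondary point requiring care is the bookkeeping of the factors $i^{2n}=(-1)^n$ arising from $w=2\pi i z$, as this is precisely what delivers the alternating sign $(-1)^{n-1}$ in the final formula; equating coefficients is then valid because both sides are analytic on the disk $|z|<1$, inside which the nearest poles at $z=\pm 1$ do not intrude.
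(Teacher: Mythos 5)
Your proof is correct and its skeleton is the same as the paper's first argument: both expand $\pi z\cot\pi z$ once via the substitution $z\mapsto 2\pi iz$ in the even Bernoulli generating function (with the same sign bookkeeping $(2\pi i)^{2n}=(-1)^n(2\pi)^{2n}$) and once as $1-2\sum_{m\geqslant 1}\zeta(2m)z^{2m}$, then equate coefficients. Where you genuinely differ is in how the second expansion is obtained. The paper takes the logarithmic derivative of the product $\sin\pi z=\pi z\prod_{n\geqslant 1}(1-\frac{z}{n})(1+\frac{z}{n})$, i.e.\ it imports the Euler--Weierstrass factorization of the sine, precisely the ingredient the introduction flags as the non-elementary step in Euler's original solution. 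You instead antidifferentiate the partial-fraction expansion of $1/\sin^2 x$ that the paper proves elementarily in its section on calculating $\zeta(2)$, arriving at $\cot x=\frac{1}{x}+\sum_{n\geqslant 1}\frac{2x}{x^2-n^2\pi^2}$ with the integration constant fixed by oddness; this makes the lemma self-contained relative to the rest of the paper and avoids the sine product altogether. Two small points deserve explicit mention if you write this up: first, the termwise integration must keep each pair $\frac{1}{(x+n\pi)^2}+\frac{1}{(x-n\pi)^2}$ together, since the antiderivatives of the individual terms do not form a convergent series while each pair integrates to $-\frac{2x}{x^2-n^2\pi^2}$ with the constants cancelling, and near $x=0$ one should integrate $\frac{1}{\sin^2 t}-\frac{1}{t^2}$ to tame the singular term; second, the rearrangement of the resulting double series into $\sum_m\zeta(2m)z^{2m}$, which you already justify by absolute convergence on $|z|<1$. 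Both are routine, and the rest of your argument matches the paper's.
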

\begin{proof}
By replacing $z$ by $2\pi iz$ in the identity
\[
\frac{z}{2}\cdot\frac{e^{z/2}+e^{-z/2}}{e^{z/2}-e^{-z/2}}=\sum_{n=0}^{\infty}\frac{B_{2n}}{(2n)!}z^{2n},
\]
we get
\[
\pi z\cot\pi z=\sum_{n=0}^{\infty}(-1)^n\frac{(2\pi)^{2n}}{(2n)!}B_{2n}z^{2n}.
\]
Then by taking the logarithmic derivative of the product expansion for the sine,
\[
\sin\pi z=\pi z\prod_{n=1}^{\infty}(1-\frac{z}{n})(1+\frac{z}{n}),
\]
we get the expansion of $\pi z\cot\pi z$.
\[
\pi z\cot\pi z=1+\sum_{n=1}^{\infty}\biggl[\frac{z}{z-n}+\frac{z}{z+n}\biggr]=1-2\sum_{n=1}^{\infty}\zeta(2n)z^{2n}.
\]
Comparing the coefficents of $z^{2n}$, we get \eqref{eqn:ZetaBernoulli}.

Another way is to take the logarithmic derivative of the identity \cite{mei}
\[
\frac{\sinh x}{x}=\prod_{k=1}^{\infty}(\frac{x^2}{k^2\pi^2}+1),\quad x\in\mathbb{R},
\]
which yields
\[
x\coth x =1+\sum_{m=0}^{\infty}\frac{2(-1)^{m}\zeta(2m+2)}{\pi^{2m+2}}x^{2m+2}.
\]
Then we get \eqref{eqn:ZetaBernoulli} by comparing with
\[
x\coth x=\frac{x}{\tanh x}=\frac{2x}{e^{2x}-1}+x=\sum_{n=0}^{\infty}\frac{2^{2n}B_{2n}}{(2n)!}x^{2n}.
\]
\end{proof}

\begin{prop}
\[
\sum_{n=1}^{\infty}(-1)^{n-1}\mathcal{B}_n=\frac{\pi^2}{6}-\frac{3}{2}.
\]
\end{prop}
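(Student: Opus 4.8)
The plan is to realize each summand as an integral and then sum underneath the integral sign. Since $\mathcal B_n=(-1)^{n-1}B_{2n}$, note first that $(-1)^{n-1}\mathcal B_n=B_{2n}$, so the asserted series is $\sum_{n\ge1}B_{2n}$. Starting from the Gamma-function identity $(2n)!=\int_0^{\infty}t^{2n}e^{-t}\,dt$, I would write
\[
(-1)^{n-1}\mathcal B_n=\frac{(-1)^{n-1}\mathcal B_n}{(2n)!}\int_0^{\infty}t^{2n}e^{-t}\,dt,
\]
sum over $n\ge1$, and interchange summation with integration. The inner series is exactly the one in the definition of the Bernoulli numbers recalled above: subtracting the first two terms from the expansion of $z/(e^z-1)$ gives $\sum_{n=1}^{\infty}\frac{(-1)^{n-1}\mathcal B_n}{(2n)!}t^{2n}=\frac{t}{e^t-1}-1+\frac t2$. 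Hence, formally,
\[
\sum_{n=1}^{\infty}(-1)^{n-1}\mathcal B_n=\int_0^{\infty}e^{-t}\Bigl(\frac{t}{e^t-1}-1+\frac t2\Bigr)\,dt .
\]

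It then remains to evaluate this integral, which splits into three convergent pieces. The two elementary pieces give $-\int_0^\infty e^{-t}\,dt+\tfrac12\int_0^\infty te^{-t}\,dt=-1+\tfrac12=-\tfrac12$. For the main piece I would expand $\frac{e^{-t}}{e^t-1}=\frac{e^{-2t}}{1-e^{-t}}=\sum_{m=2}^{\infty}e^{-mt}$ and integrate termwise:
\[
\int_0^\infty \frac{t e^{-t}}{e^t-1}\,dt=\sum_{m=2}^{\infty}\int_0^\infty t e^{-mt}\,dt=\sum_{m=2}^{\infty}\frac1{m^2}=\zeta(2)-1=\frac{\pi^2}{6}-1 .
\]
Adding the three contributions yields $\frac{\pi^2}{6}-1-\frac12=\frac{\pi^2}{6}-\frac32$, which is the claimed value.

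The main obstacle is the legitimacy of interchanging the sum with the integral. The power series for $z/(e^z-1)$ has radius of convergence $2\pi$, so the series $\sum_n \frac{(-1)^{n-1}\mathcal B_n}{(2n)!}t^{2n}$ diverges for $t>2\pi$, and correspondingly $\sum_n(-1)^{n-1}\mathcal B_n=\sum_n B_{2n}$ does not converge in the ordinary sense, its terms being unbounded. The identity must therefore be read in the Borel/Abel (analytic-continuation) sense furnished precisely by the integral above: the function $\frac{t}{e^t-1}-1+\frac t2$ is the genuine analytic continuation of the partial-sum generating function beyond $t=2\pi$, and the convergent integral $\int_0^{\infty}e^{-t}(\cdots)\,dt$ is the value being asserted. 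I would accordingly either state the result explicitly as a summation-by-integration identity, or, if ordinary convergence is intended, flag that the left-hand side requires this regularized interpretation. Once that reading is fixed, everything else is routine, the only quantitative input being $\int_0^\infty \frac{te^{-t}}{e^t-1}\,dt=\zeta(2)-1$ together with $\zeta(2)=\frac{\pi^2}{6}$.
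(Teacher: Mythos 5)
Your computation is essentially the paper's own proof read in the opposite direction: the paper starts from $\int_0^1\frac{|\ln t|}{1-t}\,dt=\zeta(2)$, substitutes $s=\ln t$ to get $\int_{-\infty}^0\frac{se^s}{e^s-1}\,ds$, expands $\frac{s}{e^s-1}=1-\frac{s}{2}+\sum_{n\ge1}\frac{(-1)^{n-1}\mathcal B_n}{(2n)!}s^{2n}$, and integrates termwise using $\int_{-\infty}^0 s^{k}e^s\,ds=(-1)^kk!$; under $s=-t$ this is exactly your identity $\sum_{n\ge1}(-1)^{n-1}\mathcal B_n=\int_0^\infty e^{-t}\bigl(\frac{t}{e^t-1}-1+\frac{t}{2}\bigr)\,dt$, and your evaluation of the three pieces ($-1$, $+\frac12$, and $\zeta(2)-1$ via the geometric expansion) matches the paper's $\frac32+\bigl(\zeta(2)-\frac32\bigr)$ bookkeeping. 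The one genuinely valuable thing you add is the caveat at the end, and you are right to insist on it: since $\mathcal B_n=(-1)^{n-1}B_{2n}=\frac{2(2n)!}{(2\pi)^{2n}}\zeta(2n)\to\infty$, the series $\sum_{n\ge1}(-1)^{n-1}\mathcal B_n=\sum_{n\ge1}B_{2n}$ diverges in the ordinary sense, and the termwise integration over all of $(0,\infty)$ (or $(-\infty,0)$) steps outside the disc of convergence $|z|<2\pi$ of the Bernoulli generating function. The paper performs this interchange silently and states the result as an ordinary equality; your reading of the left-hand side as a Borel/Abel-regularized sum, with the convergent integral as its definition, is the correct way to make the proposition true, and flagging that is not pedantry but a needed repair to the statement itself.
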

\begin{proof}
\[
\begin{split}
\int_{0}^{1}\frac{|\ln t|}{1-t}dt&\stackrel{\tiny s=\ln t}{=}\int_{-\infty}^{0}\frac{se^s}{e^s-1}ds\\
&=\int_{-\infty}^{0}e^s\biggl[1-\frac{s}{2}+\sum_{n=1}^{\infty}\frac{(-1)^{n-1}\mathcal{B}_n}{(2n)!}s^{2n}\biggr]ds\\
&=\int_{-\infty}^{0}e^sds-\frac{1}{2}\int_{-\infty}^{0}sde^s+\int_{-\infty}^{0}e^s\sum_{n=1}^{\infty}\frac{(-1)^{n-1}\mathcal{B}_n}{(2n)!}s^{2n}ds\\
&=1-\frac{1}{2}\Bigl[se^s\Bigl|_{-\infty}^{0}-\int_{-\infty}^{0}e^sds\Bigr]+\sum_{n=1}^{\infty}\frac{(-1)^{n-1}\mathcal{B}_n}{(2n)!}\int_{-\infty}^{0}s^{2n}e^sds\\
&=\frac{3}{2}+\sum_{n=1}^{\infty}\frac{(-1)^{n-1}\mathcal{B}_n}{(2n)!}\cdot I_{2n},
\end{split}
\]
where
\[
I_{2n}=\int_{-\infty}^{0}s^{2n}e^sds.
\]
It is easy to prove that $I_{k}=(-1)^k\cdot k!$ by induction. Therefore,
\[
\int_{0}^{1}\frac{|\ln t|}{1-t}dt=\frac{3}{2}+\sum_{n=1}^{\infty}(-1)^{n-1}\mathcal{B}_n.
\]
\end{proof}

Let us consider the expansion
\[
\frac{z}{e^z+1}=\sum_{n=0}^{\infty}\frac{G_n}{n!}z^n,
\]
where $G_n$ are Genocchi numbers. Then
\[
z=(e^z+1)(\sum_{n=0}^{\infty}\frac{G_n}{n!}z^n)=(2+\sum_{m=1}^{\infty}\frac{z^m}{m!})(\sum_{n=0}^{\infty}\frac{G_n}{n!}z^n)=\sum_{n=0}^{\infty}\frac{2G_n}{n!}z^n+\sum_{n=1}^{\infty}\frac{1}{n!}\biggl[\sum_{k=0}^{n-1}\binom{n}{k}G_k\biggr]z^n.
\]
Thus, $G_0=0$ and $2G_1+G_0=1$, which infers that $G_1=-\frac{1}{2}$. For $n>1$,
\[
\frac{2G_n}{n!}+\sum_{k=0}^{n-1}\frac{1}{n!}\binom{n}{k}G_k=0.
\]
That is,
\begin{equation}\label{eqn:G_n}
G_n=-\frac{1}{2}\sum_{k=0}^{n-1}\binom{n}{k}G_k,\quad (n>1).
\end{equation}

Note that
\[
\frac{z}{e^z-1}-\frac{z}{e^z+1}=\frac{2z}{e^{2z}-1}.
\]
Taking expansion,
\[
\sum_{n=0}^{\infty}\frac{B_n}{n!}z^n-\sum_{n=0}^{\infty}\frac{G_n}{n!}z^n=\sum_{n=0}^{\infty}\frac{B_n}{n!}(2z)^n,
\]
we have
\begin{equation}\label{eqn:G_n_and_B_n}
G_n=-(2^n-1)B_n.
\end{equation}
This give a quick way to compute Bernoulli numbers since in \eqref{eqn:G_n} we have $\binom{n+1}{k}=\frac{n+1}{n+1-k}\binom{n}{k}$.

If let $\mathcal{G}_n$ denote $(-1)^{n-1}G_{2n}$, then
\[
\mathcal{G}_n=-(2^{2n}-1)\mathcal{B}_n.
\]

\begin{prop}
\[
\sum_{n=1}^{\infty}(-1)^{n}(2^n-1)B_n=\sum_{n=1}^{\infty}(-1)^{n-1}G_n=\frac{\pi^2}{12}.
\]
\end{prop}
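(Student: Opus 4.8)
The plan is to treat the two equalities separately. The first equality $\sum_{n=1}^{\infty}(-1)^{n}(2^n-1)B_n=\sum_{n=1}^{\infty}(-1)^{n-1}G_n$ is purely algebraic: substituting the relation \eqref{eqn:G_n_and_B_n}, namely $G_n=-(2^n-1)B_n$, gives $(-1)^{n-1}G_n=(-1)^n(2^n-1)B_n$ term by term, so the two series coincide identically. Hence it suffices to evaluate $\sum_{n=1}^{\infty}(-1)^{n-1}G_n$, and here I would mirror exactly the argument used for the companion statement $\sum_{n=1}^{\infty}(-1)^{n-1}\mathcal{B}_n=\frac{\pi^2}{6}-\frac{3}{2}$, replacing the generating function $\frac{z}{e^z-1}$ by $\frac{z}{e^z+1}$ and the integral $\int_{0}^{1}\frac{|\ln t|}{1-t}dt$ by $\int_{0}^{1}\frac{|\ln t|}{1+t}dt$.

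Concretely, recall from Section 5 that $\int_{0}^{1}\frac{\ln t}{1+t}dt=-\frac{1}{2}\zeta(2)$, so that $\int_{0}^{1}\frac{|\ln t|}{1+t}dt=\frac{\pi^2}{12}$ using $|\ln t|=-\ln t$ on $(0,1)$; this already matches \eqref{eqn:half_zeta2}. The key step is to expand this same integral in Genocchi numbers. Substituting $s=\ln t$ turns it into
\[
\int_{0}^{1}\frac{|\ln t|}{1+t}dt=-\int_{-\infty}^{0}\frac{se^s}{1+e^s}ds=-\int_{-\infty}^{0}e^s\cdot\frac{s}{e^s+1}ds,
\]
and plugging in the defining expansion $\frac{s}{e^s+1}=\sum_{n=0}^{\infty}\frac{G_n}{n!}s^n$ and integrating term by term gives
\[
\int_{0}^{1}\frac{|\ln t|}{1+t}dt=-\sum_{n=0}^{\infty}\frac{G_n}{n!}\int_{-\infty}^{0}s^ne^s\,ds=-\sum_{n=0}^{\infty}\frac{G_n}{n!}I_n,
\]
where $I_n=(-1)^n n!$ is the moment already computed in the preceding proof. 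Since $G_0=0$, the right side collapses to $-\sum_{n=1}^{\infty}(-1)^nG_n=\sum_{n=1}^{\infty}(-1)^{n-1}G_n$, and comparing with the value $\frac{\pi^2}{12}$ completes the argument.

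The main obstacle is the justification of the term-by-term integration, which is genuinely the same delicate point as in the preceding Proposition. The series $\frac{s}{e^s+1}=\sum_{n\ge 0}\frac{G_n}{n!}s^n$ has radius of convergence only $\pi$, whereas the integral runs over all of $(-\infty,0)$; moreover, since $|G_{2n}|=(2^{2n}-1)|B_{2n}|$ grows faster than any geometric rate, the resulting series $\sum(-1)^{n-1}G_n$ does not converge in the ordinary sense. Thus the identity should be read in the same formal spirit as $\sum(-1)^{n-1}\mathcal{B}_n=\frac{\pi^2}{6}-\frac{3}{2}$: the interchange of sum and integral is carried out at the level of the generating-function manipulation the paper adopts throughout, and the equality with $\frac{\pi^2}{12}$ records the common value produced by this procedure rather than a classically convergent numerical sum.
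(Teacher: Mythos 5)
Your proposal follows essentially the same route as the paper's own proof: the substitution $s=\ln t$, the expansion of $\frac{s}{e^s+1}$ via the Genocchi generating function, termwise integration against the moments $I_n=(-1)^n n!$, and the value $\int_{0}^{1}\frac{\ln t}{1+t}dt=-\frac{\pi^2}{12}$ obtained from \eqref{eqn:twoIntegral}. Your closing caveat --- that $|G_{2n}|=(2^{2n}-1)|B_{2n}|$ grows factorially, so $\sum_{n}(-1)^{n-1}G_n$ diverges classically and the termwise integration is only formal --- is a genuine issue that the paper's proof silently passes over, and flagging it is to your credit.
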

\begin{proof}
By changing variables,
\[
\int_{0}^{1}\frac{\ln t}{1+t}dt=\int_{-\infty}^{0}\frac{xe^x}{1+e^x}dx.
\]
Note that
\[
\frac{x}{e^x+1}=\sum_{n=1}^{+\infty}\frac{G_n}{n!}x^n,
\]
then
\[
\begin{split}
\int_{0}^{1}\frac{\ln t}{1+t}dt&=\int_{-\infty}^{0}\Bigl(\sum_{n=1}^{+\infty}\frac{G_n}{n!}x^n\Bigr)e^xdx\\
&=\sum_{n=1}^{+\infty}\frac{G_n}{n!}\int_{-\infty}^{0}x^ne^x dx.
\end{split}
\]
Set $I_n=\int_{-\infty}^{0}x^ne^x dx$, then $I_n=(-n)I_{n-1}$. By induction, we prove that $I_n=(-1)^n n!$. Therefore,
\[
\int_{0}^{1}\frac{\ln t}{1+t}dt=\sum_{n=1}^{\infty}(-1)^{n}G_n.
\]
Together with \eqref{eqn:twoIntegral}, we have
\[
\sum_{n=1}^{\infty}(-1)^{n-1}G_n=\frac{\pi^2}{12}.
\]
\end{proof}
\begin{remark}
Since $B_1=1$ and $G_1=-\frac{1}{2}$, the proposition can also be written as
\[
\sum_{n=1}^{\infty}(2^{2n}-1)\mathcal{B}_n=\frac{\pi^2}{12}+1,\quad\sum_{n=1}^{\infty}\mathcal{G}_n=-\frac{\pi^2}{12}-\frac{1}{2}.
\]
\end{remark}

Bernoulli polynomials $B_n(x)$ are defined by the formula
\[
\frac{ze^{xz}}{e^z-1}=\sum_{n=0}^{\infty}\frac{B_n(x)}{n!}z^n.
\]
The functions $B_n(x)$ are polynomials in $x$ and
\[
B_n(x)=\sum_{k=0}^{n}\binom{n}{k}B_{n-k}x^{k}.
\]
Similarly, we define $G_n(x)$ by the formula
\begin{equation}\label{eqn:Def_of_Dnx}
\frac{ze^{xz}}{e^z+1}=\sum_{n=0}^{\infty}\frac{G_n(x)}{n!}z^n.
\end{equation}
The functions $G_n(x)$ are polynomials in $x$. In fact,
\[
\sum_{n=0}^{\infty}\frac{G_n(x)}{n!}z^n=\Bigl(\frac{z}{e^z+1}\Bigr)e^{xz}=\biggl(\sum_{n=0}^{\infty}\frac{G_n}{n!}z^n\biggr)\biggl(\sum_{n=0}^{\infty}\frac{x^n}{n!}z^n\biggr).
\]
Comparing the coefficients of $z^n$, we have
\begin{equation}\label{eqn:Dnx}
G_n(x)=\sum_{k=0}^{n}\binom{n}{k}G_{n-k}x^{k}=\sum_{k=0}^{n}\binom{n}{k}G_{k}x^{n-k},
\end{equation}
and $G_n(0)=G_n$. Using \eqref{eqn:G_n}, for $n>1$, we have
\[
G_n(1)=\sum_{k=0}^{n}\binom{n}{k}G_{k}=\sum_{k=0}^{n-1}\binom{n}{k}G_{k}+G_n=-2G_n+G_n=-G_n.
\]
On the other hand, by definition,
\[
\frac{ze^{xz}}{e^z+1}=\sum_{n=0}^{\infty}\frac{G_n(x)}{n!}z^n,\quad\frac{ze^{(x+1)z}}{e^z+1}=\sum_{n=0}^{\infty}\frac{G_n(x+1)}{n!}z^n.
\]
Do the addition,
\[
\sum_{n=0}^{\infty}\frac{G_n(x+1)+G_n(x)}{n!}z^n=ze^{xz}=\sum_{n=0}^{\infty}\frac{x^n}{n!}z^{n+1}.
\]
Comparing the coefficients of $z^n$, we have
\[
G_{k}(x+1)+G_{k}(x)=kx^{k-1},\quad k\geqslant 2.
\]
Let $x=1,2,3,\ldots,n$ and summation these equations, we get
\[
G_{k}(1)+2\bigl(G_{k}(2)+\cdots+G_{k}(n)\bigr)+G_{k}(n+1)=k\sum_{i=1}^{n}i^{k-1}.
\]
From the equation $G_{k}(1)+G_{k}(0)=0$, we infer that $G_{k}(1)=-G_{k}(0)=-G_{k}$.
\[
\begin{aligned}
G_{k}(2)&=-G_{k}(1)+k=k+G_{k},\\
G_{k}(3)&=-G_{k}(2)+k2^{k-1}=k2^{k-1}-k-G_k,\\
G_{k}(4)&=-G_{k}(3)+k3^{k-1}=k3^{k-1}-k2^{k-1}+k+G_k,\\
&\vdots\\
G_{k}(n)&=-G_{k}(n-1)+k(n-1)^{k-1}=k(n-1)^{k-1}-\cdots+(-1)^n G_k.
\end{aligned}
\]
Therefore,
\[
\sum_{i=2}^{n}G_k(i)=k(n-1)^{k-1}+k(n-3)^{k-1}+k(n-5)^{k-1}+\cdots\ .
\]
If $n=2m$,
\[
\sum_{i=2}^{2m}G_k(i)=k(2m-1)^{k-1}+k(2m-3)^{k-1}+k(2m-5)^{k-1}+\cdots+k+G_k.
\]
If $n=2m+1$,
\[
\sum_{i=2}^{2m+1}G_k(i)=k(2m)^{k-1}+k(2m-2)^{k-1}+k(2m-4)^{k-1}+\cdots+k2^{k-1}.
\]
Whether $n$ is odd or even, we always have the following trivial identity.
\[
\begin{split}
&G_k(1)+2\sum_{i=2}^{n}G_k(i)+G_{k}(n+1)\\
=&-G_k+k\Bigl[n^{k-1}+(n-1)^{k-1}+\cdots+1^{k-1}\Bigr]+G_k\\
=&k\sum_{i=1}^{n}i^{k-1}.
\end{split}
\]

By differentiating on $x$ at both sides of \eqref{eqn:Def_of_Dnx}, we also have
\[
G'_n(x)=nG_{n-1}(x).
\]
But being different from $\int_{0}^{1}B_n(x)dx=0$, we have
\[
\int_{0}^{1}G_n(x)dx=\int_{0}^{1}\frac{G'_{n+1}(x)}{n+1}dx=\frac{G_{n+1}(1)-G_{n+1}(0)}{n+1}=-\frac{2}{n+1}G_{n+1}.
\]

\begin{prop}
(i) $G_n(1-x)=(-1)^{n+1}G_n(x)$.\\
(ii) $G_n(x)=B_n(x)-2^n B_n(\frac{x}{2})$.\\
(iii) $G_n(x)=2^n B_n(\frac{x+1}{2})-B_n(x)$.\\
(iv) $B_n(x)=2^{n-1}\Bigl[B_n(\frac{x+1}{2})+B_n(\frac{x}{2})\Bigr]$.
\end{prop}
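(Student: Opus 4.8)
The plan is to prove all four identities at the level of generating functions, using the two defining relations $\frac{ze^{xz}}{e^z-1}=\sum_{n\ge 0}\frac{B_n(x)}{n!}z^n$ and $\frac{ze^{xz}}{e^z+1}=\sum_{n\ge 0}\frac{G_n(x)}{n!}z^n$, together with the single algebraic identity $e^{2z}-1=(e^z-1)(e^z+1)$. Since two power series in $z$ agree if and only if their coefficients agree, it suffices in each case to rewrite the generating function of the right-hand side and recognize it as the generating function of the left-hand side.

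For (i), I would set $F(x,z)=\frac{ze^{xz}}{e^z+1}$ and study its behavior under $z\mapsto -z$. Multiplying numerator and denominator of $F(x,-z)=\frac{-ze^{-xz}}{e^{-z}+1}$ by $e^z$ gives $F(x,-z)=\frac{-ze^{(1-x)z}}{e^z+1}=-F(1-x,z)$. Expanding both sides, the left side is $\sum_n \frac{(-1)^nG_n(x)}{n!}z^n$ and the right side is $-\sum_n\frac{G_n(1-x)}{n!}z^n$; comparing coefficients of $z^n$ yields $(-1)^nG_n(x)=-G_n(1-x)$, which is exactly (i).

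For (ii) and (iii), the key preliminary step is to handle the factor $2^n$ by the substitution $z\mapsto 2z$: since $\sum_n\frac{2^nB_n(x/2)}{n!}z^n=\sum_n\frac{B_n(x/2)}{n!}(2z)^n$, the generating function of $2^nB_n(\frac{x}{2})$ is $\frac{2ze^{xz}}{e^{2z}-1}$, and likewise that of $2^nB_n(\frac{x+1}{2})$ is $\frac{2ze^{(x+1)z}}{e^{2z}-1}$. For (ii) I would then form $\frac{ze^{xz}}{e^z-1}-\frac{2ze^{xz}}{e^{2z}-1}=ze^{xz}\bigl[\frac{1}{e^z-1}-\frac{2}{(e^z-1)(e^z+1)}\bigr]$ and simplify the bracket to $\frac{1}{e^z+1}$, recognizing the result as $\sum_n\frac{G_n(x)}{n!}z^n$. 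For (iii) the analogous computation $\frac{2ze^{(x+1)z}}{e^{2z}-1}-\frac{ze^{xz}}{e^z-1}=ze^{xz}\bigl[\frac{2e^z}{(e^z-1)(e^z+1)}-\frac{1}{e^z-1}\bigr]$ again collapses to $\frac{ze^{xz}}{e^z+1}$, giving $G_n(x)$.

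Finally, (iv) follows with no further generating-function work: equating the two expressions for $G_n(x)$ from (ii) and (iii) gives $B_n(x)-2^nB_n(\frac{x}{2})=2^nB_n(\frac{x+1}{2})-B_n(x)$, whence $2B_n(x)=2^n\bigl[B_n(\frac{x+1}{2})+B_n(\frac{x}{2})\bigr]$. The computations are short; the only place demanding care is the bookkeeping of the scaling $z\mapsto 2z$ that produces the $2^n$ factors and the partial-fraction-style simplification of the brackets using $e^{2z}-1=(e^z-1)(e^z+1)$, so I expect that algebraic simplification to be the main, though routine, obstacle.
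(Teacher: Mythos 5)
Your proposal is correct and takes essentially the same approach as the paper: all four parts are proved by manipulating the generating functions, using $z\mapsto -z$ for (i), the identity $e^{2z}-1=(e^z-1)(e^z+1)$ together with the rescaling $z\mapsto 2z$ for (ii) and (iii), and adding the two resulting identities for (iv). The only cosmetic difference is that the paper writes (ii) and (iii) as $\frac{ze^{xz}}{e^z-1}\mp\frac{ze^{xz}}{e^z+1}=\frac{2ze^{\cdot\,2z}}{e^{2z}-1}$ and reads off $B_n(x)\mp G_n(x)$, whereas you isolate $G_n(x)$ on one side; the algebra is identical.
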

\begin{proof}
(i)
\[
\sum_{n=0}^{\infty}G_n(1-x)\frac{z^n}{n!}=\frac{ze^{(1-x)z}}{e^z+1}=-\frac{(-z)e^{x(-z)}}{e^{-z}+1}=\sum_{n=0}^{\infty}G_n(x)\frac{(-1)^{n+1}z^n}{n!},
\]
thus,
\[
G_n(1-x)=(-1)^{n+1}G_n(x).
\]

(ii)
\[
\frac{ze^{xz}}{e^z-1}-\frac{ze^{xz}}{e^z+1}=\frac{2ze^{\frac{x}{2}2z}}{e^{2z}-1}.
\]
By comparing the coeffients of $z^n$, we get
\[
B_n(x)-G_n(x)=2^n B_n(\frac{x}{2}).
\]

(iii)
\[
\frac{ze^{xz}}{e^z-1}+\frac{ze^{xz}}{e^z+1}=\frac{2ze^{\frac{x+1}{2}2z}}{e^{2z}-1}.
\]
By comparing the coeffients of $z^n$, we get
\[
B_n(x)+G_n(x)=2^n B_n(\frac{x+1}{2}).
\]

(iv) By (ii) and (iii).
\end{proof}
\begin{remark}
(1) Especially, we have $B_{2n}(\frac{1}{2})=4^n B_{2n}(\frac{1}{4})$, since $G_{2n}(\frac{1}{2})=0$.\\
(2) Let $x=0$ or $1$ in (iv), we have $B_{n}(\frac{1}{2})=(2^{1-n}-1)B_{n}$. Thus, $B_{2n}(\frac{1}{4})=2^{-2n}(2^{1-2n}-1)B_{2n}$.\\
(3) Let $x=1$ in (iii), we will get $G_n=(1-2^n)B_n$.
\end{remark}

Equation \eqref{eqn:G_n_and_B_n} can also be deduced in the following way. Using
\[
\frac{z/2}{e^{z/2}-1}=\frac{1}{2}\frac{ze^{z/2}+z}{e^z-1},
\]
we obtain
\begin{equation}\label{eqn:B_n_half}
B_n(\frac{1}{2})=(2^{1-n}-1)B_n,\quad\forall\ n\geqslant 0.
\end{equation}
Similarly,
\[
\begin{split}
\sum_{n=0}^{\infty}\frac{G_n}{n!}(\frac{z}{2})^n&=\frac{z/2}{e^{z/2}+1}=\frac{1}{2}\frac{ze^{z/2}-z}{e^z-1}\\
&=\frac{1}{2}\sum_{n=0}^{\infty}\frac{B_n(\frac{1}{2})}{n!}z^n-\frac{1}{2}\sum_{n=0}^{\infty}\frac{B_n}{n!}z^n.
\end{split}
\]
This infers that
\[
\frac{1}{2^n}G_n=\frac{1}{2}B_n(\frac{1}{2})-\frac{1}{2}B_n.
\]
By substituting \eqref{eqn:B_n_half} in the above formula, we obtainde
\[
G_n=(1-2^n)B_n,\quad n>1.
\]


\noindent{\bf Acknowledgments :}
we express our gratitude to David Harvey who pointed out that the numbers $G_n$ in our manuscript (here is $G_n$) are essentially the Genocchi numbers, see \cite{GenocchiNumber}.


\bigskip

\noindent Haifeng Xu\\
School of Mathematical Science\\
Yangzhou University\\
Jiangsu China 225002\\
hfxu@yzu.edu.cn\\
\medskip

\noindent Jiuru Zhou\\
School of Mathematical Science\\
Yangzhou University\\
Jiangsu China 225002\\
zhoujr1982@hotmail.com

\end{document}